\newcommand{\N}{{\mathbb N}}
\newcommand{\Z}{{\mathbb Z}}
\DeclareMathOperator{\Arg}{Arg}
\theoremstyle{plain}
\numberwithin{equation}{section}
\newtheorem{thm}{Theorem}[section]
\newtheorem{theorem}[thm]{Theorem}
\newtheorem{lemma}[thm]{Lemma}
\newtheorem{conjecture}[thm]{Conjecture}
\newtheorem{question}[thm]{Question}
\theoremstyle{remark}
\newtheorem{remark}{Remark}
\begin{document}

\title[Infinite family of bases]
{An infinite family of multiplicatively independent bases of number systems in cyclotomic number fields}

\author[M. G. Madritsch]{Manfred G. Madritsch}
\address{Manfred Madritsch\newline
\noindent 1. Universit\'e de Lorraine, Institut Elie Cartan de Lorraine, UMR 7502, Vandoeuvre-l\`es-Nancy, F-54506, France;\newline
\noindent 2. CNRS, Institut Elie Cartan de Lorraine, UMR 7502, Vandoeuvre-l\`es-Nancy, F-54506, France}
\email{manfred.madritsch@univ-lorraine.fr}

\author{Volker Ziegler}
\address{Volker Ziegler\newline
\noindent Johann Radon Institute for Computational and Applied Mathematics (RICAM)\newline
\noindent Austrian Academy of Sciences\newline
\noindent Altenbergerstr. 69\newline
\noindent A-4040 Linz, Austria}
\email{volker.ziegler\char'100ricam.oeaw.ac.at}
\thanks{The second author was supported by the Austrian Science Fund (FWF) under the project P~24801-N26.}

\begin{abstract}
  Let $\zeta_k$ be a $k$-th primitive root of unity, $m\geq\phi(k)+1$ an
  integer and $\Phi_k(X)\in\Z[X]$ the $k$-th cyclotomic polynomial.
  In this paper we show that the pair $(-m+\zeta_k,\mathcal N)$ is a
  canonical number system, with $\mathcal
  N=\{0,1,\dots,|\Phi_k(m)|-1\}$.  Moreover we also discuss whether the
  two bases $-m+\zeta_k$ and $-n+\zeta_k$ are multiplicatively
  independent for positive integers $m$ and $n$ and $k$ fixed.
\end{abstract}

\subjclass[2010]{11A63,11D61,11D41} \keywords{Canonical number
  systems, Radix representations, Diophantine equations,
  Nagell-Ljunggren equation}

\maketitle

\section{Introduction}
Let $q\geq2$ be a positive integer. Then the pair
$(q,\{0,1,\ldots,q-1\})$ is a number system in the positive integers,
\textit{i.e.} every integer $n\in\N$ has a unique representation of
the form
$$n=\sum_{j\geq0}a_jq^j\quad(a_j\in\{0,1,\ldots,q-1\}).$$

Taking a negative integer $q\leq-2$ and $\{0,1,\ldots,\lvert
q\rvert-1\}$ as set of digits one gets a number system for all
integers $\Z$.  This concept of a number system was further extended
to the Gaussian integers by
Knuth~\cite{knuth1981:art_computer_programming}. He showed that the
pairs $(-1+i,\{0,1\})$ and $(-1-i,\{0,1\})$ are number systems in the
Gaussian integers, \textit{i.e.} every Gaussian integer $n\in\Z[i]$
has a unique representation of the form
$$n=\sum_{j\geq0}a_jb^j\quad(a_j\in\{0,1\}),$$
where $b=-1+i$ or $b=-1-i$. It was shown by K{\'a}tai and Szab{\'o}
\cite{katai_szabo1975:canonical_number_systems} that all possible
bases for the Gaussian integers are of the form $-m\pm i$ with $m$ a positive
integer. These results were extended independently by K\'atai and
Kov\'acs \cite{katai_kovacs1981:canonical_number_systems,
  katai_kovacs1980:kanonische_zahlensysteme_in} and
Gilbert~\cite{gilbert81:_radix}, who classified all quadratic
extensions.

A different point of view on the above systems is the following. Let
$P=p_dX^d+p_{d-1}X^{d-1}+\cdots+p_1X+p_0\in\Z[X]$ with $p_d=1$ and
$\mathcal{R}=\Z[X]/P(X)\Z[X]$. Then we call the pair $(P,\mathcal{N})$
with $\mathcal{N}=\{0,1,\ldots,\lvert p_0\rvert-1\}$ a canonical
number system if every element $\gamma\in\mathcal{R}$ has a unique
representation of the form
\[\gamma=\sum_{j\geq0}a_jX^j\quad(a_j\in\mathcal{N}).\]
If $P$ is irreducible and $\beta$ is one of its roots, then
$\mathcal{R}$ is isomorphic to $\Z[\beta]$. In this case we simply
write $(\beta,\mathcal{N})$ instead of $(P,\mathcal{N})$. By setting
$P=X+q$ or $P=X^2+2mX+(m^2+1)$ we obtain the canonical number systems
in the integers and Gaussian integers from above, respectively.

Kov\'acs \cite{kovacs1981:canonical_number_systems} proved that for
any algebraic number field $K$ and order $\mathcal R=\Z[\alpha]$ of
$K$ there exists $\beta$ such that $(\beta,\mathcal{N})$ is a
canonical number system for $\mathcal R$. Moreover he proved that if
$1\leq p_{d-1}\leq \cdots\leq p_1\leq p_0$, $p_0\geq2$ and if $P$ is
irreducible, then $(P,\mathcal{N})$ is a canonical number system in
$\mathcal{R}$. Peth{\H o}
\cite{pethoe1991:polynomial_transformation_and} weakened the
irreducibility condition by only assuming that no root of the
polynomial is a root of unity.

Kov\'acs and Peth{\H o} \cite{kovacs_petho1991:number_systems_in}
provided necessary and sufficient conditions on the pair
$(\beta,\mathcal{N})$ to be a number system in $\Z[\alpha]$. A decade
later Akiyama 
and Peth{\H o} \cite{akiyama_petho2002:canonical_number_systems}
significantly reduced the number of cases one has to check under the
additional assumption that
$$\sum_{i=1}^d\lvert p_i\rvert<p_0.$$
Recently Brunotte \cite{Brunotte:2012} gave a unified proof of the 
results due to Gilbert~\cite{gilbert81:_radix} and Kov\'acs and
Peth{\H o}~\cite{kovacs_petho1991:number_systems_in} on irreducible CNS polynomials
and extended them to reducible polynomials.

Let us denote by $\zeta_k$ some primitive $k$-th root of unity. Since
$+i$ and $-i$ are primitive fourth roots of unity, we can say that all
the bases for the Gaussian integers are of the form $-m+\zeta_4$, with
$m\geq 1$. Furthermore an easy computation using the classification
for quadratic extensions shows that $-m+\zeta_3$ and $-m+\zeta_6$ with
$m\geq1$ are bases in $\Z[\zeta_3]$ and $\Z[\zeta_6]$,
respectively. Thus our first result answers the question, whether
$-m+\zeta_k$ is a basis in $\Z[\zeta_k]$ for general $k>2$.

\begin{theorem}\label{th:bases}
  Let $k>2$ and $m$ be positive integers. If $m\geq\phi(k)+1$, then
  $\{-m+\zeta_k,\mathcal{N}\}$ is a canonical number system.
\end{theorem}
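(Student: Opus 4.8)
The plan is to pass to the minimal polynomial of $\beta=-m+\zeta_k$ and to verify the monotone--coefficient criterion of Kov\'acs recalled above. As $\Phi_k$ is irreducible over $\Q$ of degree $d=\phi(k)$, the polynomial $P(X):=\Phi_k(X+m)$ is monic, irreducible over $\Q$, of degree $d$, and has $\beta$ as a root, so $\mathcal R=\Z[X]/P(X)\Z[X]\cong\Z[\beta]=\Z[\zeta_k]$. Its constant term is $P(0)=\Phi_k(m)=\Norm_{\Q(\zeta_k)/\Q}(\beta)$, and grouping each primitive root with its complex conjugate (legitimate since $k>2$, so $d$ is even and no primitive root is real) gives $\Phi_k(m)=\prod_a(m^2-2m\cos\theta_a+1)\ge(m-1)^d\ge2$, where $\theta_a=2\pi a/k$ runs over a set of representatives of the residues coprime to $k$ modulo $a\mapsto-a$. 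Hence $\mathcal N=\{0,1,\dots,|\Phi_k(m)|-1\}$ is exactly $\{0,1,\dots,|P(0)|-1\}$, and it suffices to prove that $P(X)=\sum_{j=0}^d p_jX^j$ satisfies $p_0\ge2$ and $1=p_d\le p_{d-1}\le\cdots\le p_1\le p_0$; Kov\'acs' theorem then yields the claim.

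The main device is to divide out $\Phi_k(m)$ and use the conjugate--pair factorisation
\[
\frac{\Phi_k(m+X)}{\Phi_k(m)}=\prod_a\bigl(1+\beta_aX+\gamma_aX^2\bigr),\qquad
\beta_a=\frac{2(m-\cos\theta_a)}{m^2-2m\cos\theta_a+1},\quad
\gamma_a=\frac1{m^2-2m\cos\theta_a+1},
\]
whose coefficient sequence is $(p_j/\Phi_k(m))_j$. One checks that $\beta_a,\gamma_a>0$, that $\beta_a-2\gamma_a=2(m-1-\cos\theta_a)/(m^2-2m\cos\theta_a+1)>0$ already for $m\ge2$, and --- the crucial point --- that $\beta_a\le\frac2{m-1}$ (the right--hand side is the value at $\cos\theta_a=1$, since $\beta_a$ increases with $\cos\theta_a$), so that
\[
\sum_a\beta_a=\frac{\Phi_k'(m)}{\Phi_k(m)}\le\frac{\phi(k)}{m-1}\le1,
\]
the last inequality being exactly where the hypothesis $m\ge\phi(k)+1$ is used.

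It remains to show that every partial product $f=\prod_{a\in A'}(1+\beta_aX+\gamma_aX^2)$ has a non--increasing coefficient sequence; taking $A'$ to be the full index set, and using $\Phi_k(m)>0$ together with $p_d=1$, this gives $p_0\ge p_1\ge\cdots\ge p_d=1$ and hence the desired chain (and $p_0\ge2$). I would argue by induction on $|A'|$, adjoining one factor $1+\beta X+\gamma X^2$ at a time; the constant term of every partial product stays equal to $1$. If $f=\sum f_jX^j$ is non--increasing with $f_0=1$ and $g=f\cdot(1+\beta X+\gamma X^2)$, then for $j\ge3$ the difference
\[
g_{j-1}-g_j=(f_{j-1}-f_j)+\beta(f_{j-2}-f_{j-1})+\gamma(f_{j-3}-f_{j-2})
\]
is $\ge0$ automatically; the case $j=1$ is the inequality $1\ge f_1+\beta$, i.e. that the running sum of the $\beta_a$ stays $\le1$; and the case $j=2$ reduces, using that same bound, to $\sum_a(\beta_a-2\gamma_a)\ge0$. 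Both were established above, so the induction goes through, and Kov\'acs' criterion finishes the proof that $(-m+\zeta_k,\mathcal N)$ is a canonical number system.

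I expect the only genuinely delicate step to be the inequality $\sum_a\beta_a\le1$, i.e. $\Phi_k(m)\ge\Phi_k'(m)$: it is the $j=1$ case of the induction, its margin is only $\phi(k)/(m-1)$, and it is precisely what pins down the threshold $m=\phi(k)+1$. Everything else is formal bookkeeping with the convolution once the conjugate--pair factorisation is in place, and in particular the positivity of all the $p_j$ comes out as a by--product of the monotonicity rather than a separate issue; lowering the threshold would require extracting cancellation from $\sum_{\gcd(a,k)=1}(m-\zeta_k^a)^{-1}$ beyond the triangle inequality.
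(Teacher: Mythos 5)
Your proposal is correct, but it takes a genuinely different route from the paper. The paper applies Peth\H{o}'s refinement of Kov\'acs' criterion to $P(X)=\Phi_k(m+X)$ and verifies the monotonicity of the coefficients via Vieta's formula: it bounds the arguments of the products $(m-\zeta_k^{a_1})\cdots(m-\zeta_k^{a_d})$ to get $\tfrac12(m-1)^d\binom{\phi(k)}{d}\le \tilde p_d\le (m+1)^d\binom{\phi(k)}{d}$, derives a recursive inequality $\tilde p_{d+1}>\tilde p_d(m-2e^2)(\phi(k)-d)/(d+1)$, and then has to split into cases: for $d\le\phi(k)-2$ the estimate only works when $\phi(k)\ge 27$ or $m\ge 20$, so roughly $300$ remaining polynomials are checked by computer, while the step $p_1\le p_0$ is handled separately by the inequality $\sum_{b\in(\Z/k\Z)^*}(m-\zeta_k^b)^{-1}\le \phi(k)/(m-1)\le 1$. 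You instead normalize by $\Phi_k(m)$, factor $\Phi_k(m+X)/\Phi_k(m)$ into the conjugate-pair quadratics $1+\beta_aX+\gamma_aX^2$ with $\beta_a,\gamma_a>0$, $\beta_a-2\gamma_a>0$ and $\beta_a<2/(m-1)$, and prove by induction on partial products that the coefficient sequence is non-increasing; your engine is exactly the inequality $\sum_a\beta_a\le\phi(k)/(m-1)\le1$ that the paper uses only for the single step $p_1\le p_0$, but you exploit it globally. This buys a uniform, computation-free and arguably cleaner proof (and, since $\Phi_k(m+X)$ is irreducible, Kov\'acs' original criterion suffices, so the root-of-unity condition of Peth\H{o}'s lemma never enters); what it does not change is the threshold, since the margin in $\sum_a\beta_a\le1$ is still what forces $m\ge\phi(k)+1$. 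One step you state only tersely is the $j=2$ case of the induction; it does close as claimed, e.g.\ since the $X^2$-coefficient of a partial product is $e_2(\beta)+\sum_a\gamma_a\le\tfrac12\bigl(\sum_a\beta_a\bigr)^2+\sum_a\gamma_a$, whence $g_1-g_2\ge\tfrac12\sum_a(\beta_a-2\gamma_a)\ge0$ using $\sum_a\beta_a\le1$ (alternatively, use $1-f_1\ge\beta$ together with $\beta^2>\gamma$, which holds since $3(m-\cos\theta_a)^2>1-\cos^2\theta_a$ for $m\ge2$); spelling this out would make the argument complete.
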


Our second result considers, whether these bases are multiplicatively
independent. We call two algebraic integers $\alpha$ and $\beta$
multiplicatively independent, if $\alpha^p=\beta^q$ has only the
solution $p=q=0$ over the integers. Considering bases in the Gaussian
integers Hansel and Safer \cite{hansel_safer2003:vers_un_theoreme}
proved that $-m+i$ and $-n+i$ are multiplicative independent for all
$m>n>0$. Their motivation was a version of Cobham's theorem
\cite{cobham1972:uniform_tag_sequences} for the Gaussian
integers. Cobham's theorem states that a set $E\subset\N$ is $m$- and
$n$-recognizable with $m$ and $n$ multiplicative independent integers
if and only if the set $E$ is a union of arithmetic progressions. This
means that e.g. except for arithmetic progressions we are not able to
deduce the base $3$ expansion form the one in base $2$.

In the present paper we want to generalize the result of Hansel and
Safer \cite{hansel_safer2003:vers_un_theoreme} on the multiplicative
independence to bases of $\Z[\zeta_k]$ of the form $-m+\zeta_k$ for
$k\geq3$.

\begin{theorem}\label{th:mult_independent}
  Let $k\geq3$ be a positive integer. Then the algebraic integers
  $-m+\zeta_k$ and $-n+\zeta_k$ are multiplicatively independent
  provided $m>n>C(k)$, where $C(k)$ is an effective computable
  constant depending on $k$.

  Moreover, if $k$ is a power of $2,3,5,6,7,11,13$, $17,19$ or $23$,
  then $-m+\zeta_k$ and $-n+\zeta_k$ are multiplicatively independent
  as long as $m>n>0$ if $k\neq 6$ and $m>n>1$ otherwise.
\end{theorem}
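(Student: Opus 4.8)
The plan is to reduce the relation $(-m+\zeta_k)^p=(-n+\zeta_k)^q$ to a statement about \emph{perfect powers of the rational integers $\Phi_k(m)$ and $\Phi_k(n)$}, and then to invoke Baker's method for the first assertion and known results on the Nagell--Ljunggren equation for the second. Suppose $(-m+\zeta_k)^p=(-n+\zeta_k)^q$ with $(p,q)\neq(0,0)$. Since the conjugates of $-m+\zeta_k$ over $\Q$ are the numbers $-m+\zeta_k^j$ with $\gcd(j,k)=1$, taking norms and using $\Phi_k(X)=\prod_{\gcd(j,k)=1}(X-\zeta_k^j)$ together with $2\mid\phi(k)$ (as $k>2$) gives
\[
\Phi_k(m)^{p}=\Phi_k(n)^{q}.
\]
For $k\geq 3$ every primitive $k$-th root of unity is non-real, so $\Phi_k(n)$ is a positive integer, and under the hypotheses on $n$ it is $\geq 2$ (for $k$ a power of $6$ this is where one needs $n>1$, since $\Phi_6(1)=1$). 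Hence neither $-m+\zeta_k$ nor $-n+\zeta_k$ is a unit or a root of unity; comparing norms (recall $\Phi_k(m),\Phi_k(n)\geq 2$) this forces $p$ and $q$ to be nonzero of the same sign, so after replacing $(p,q)$ by $(-p,-q)$ if necessary we may take $p,q\geq 1$. Now $m>n$ yields $\Phi_k(m)>\Phi_k(n)\geq 2$; writing $d=\gcd(p,q)$, $p=dp_1$, $q=dq_1$ with $\gcd(p_1,q_1)=1$, the displayed equation becomes $\Phi_k(m)^{p_1}=\Phi_k(n)^{q_1}$, and a comparison of prime factorisations produces an integer $c\geq 2$ with $\Phi_k(m)=c^{q_1}$, $\Phi_k(n)=c^{p_1}$, where necessarily $q_1>p_1\geq 1$, so $q_1\geq 2$. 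Thus \emph{$\Phi_k(m)$ must be a perfect power with exponent $\geq 2$.}

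For the first assertion it now suffices to bound $m$. As $\Phi_k$ is separable of degree $\phi(k)\geq 2$, the theorem of Schinzel and Tijdeman bounds $q_1$ effectively in terms of $\Phi_k$; and for each admissible $q_1$ the superelliptic equation $\Phi_k(x)=y^{q_1}$ has only finitely many solutions with effective bounds by Baker's method (the case $q_1=2$, $\phi(k)=2$, i.e.\ $k\in\{3,4,6\}$, being handled by elementary factorisation, e.g.\ $4\Phi_3(x)=(2x+1)^2+3$). This yields an effectively computable $C(k)$ such that $\Phi_k(m)$ is not a perfect power once $m>C(k)$, and since $m>n>C(k)$ the first part follows.

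For the second assertion I would exploit the special form of $k$. If $k$ is a power of a prime $p$ then $\Phi_k(X)=\Phi_p(X^{k/p})$, so with $x=m^{k/p}$ the relation $\Phi_k(m)=c^{q_1}$ reads $(x^{p}-1)/(x-1)=c^{q_1}$, an instance of the Nagell--Ljunggren equation (for $p=2$ it is $m^{k/2}+1=c^{q_1}$, impossible with $c\geq 2$, $q_1\geq 2$ by the Catalan--Mihailescu theorem); if $k$ is a power of $6$ then $\Phi_k(X)=\Phi_6(X^{k/6})$ and $\Phi_6(X)=\Phi_3(X-1)$, so again we land in the case $p=3$. For $p\in\{3,5,7,11,13,17,19,23\}$ the Nagell--Ljunggren equation is completely solved, its only solutions with exponent $\geq 2$ being $(x,p)=(18,3)$ and $(x,p)=(3,5)$, which through $x=m^{k/p}$ force $k\in\{3,5,6\}$ and leave the three candidate pairs $(m,k)=(3,5),(18,3),(19,6)$. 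For $(3,5)$ neither $\Phi_5(1)=5$ nor $\Phi_5(2)=31$ is a power of $11$, so no admissible $n$ exists. For $(18,3)$ and $(19,6)$ the equation $\Phi_k(m)^p=\Phi_k(n)^q$ forces $\Phi_k(n)$ to be a power of $7$ with $n<m$, which (again by the Nagell--Ljunggren equation) leaves only $n=2$, respectively $n=3$, together with $q=3p$; then $(-m+\zeta_k)^p=\bigl((-n+\zeta_k)^3\bigr)^p$ would make $-m+\zeta_k$ and $(-n+\zeta_k)^3$ differ by a root of unity, whereas $(-2+\zeta_3)^3=-1+18\zeta_3$ and $(-3+\zeta_6)^3=-19+18\zeta_6$, and neither equals $\xi(-18+\zeta_3)$, respectively $\xi(-19+\zeta_6)$, for any sixth root of unity $\xi$. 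This contradiction finishes the proof.

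I expect the difficulty to be twofold. First, the norm reduction is lossy: it does not by itself exclude the near-miss pairs $(m,n)=(18,2)$ for $k=3$ and $(19,3)$ for $k=6$, whose norms genuinely \emph{are} multiplicatively dependent, so these must be eliminated by an explicit computation in $\Z[\zeta_3]$ and $\Z[\zeta_6]$. Second, and more fundamentally, controlling the perfect-power values of $\Phi_k$ for arbitrary $k$ forces the use of Baker's method and hence an unavoidably large (though effective) constant $C(k)$; the sharp bound in the second part is available only because for those $k$ the relevant equation has degree in the range $\{2,3,5,7,11,13,17,19,23\}$, where the Nagell--Ljunggren equation is fully resolved.
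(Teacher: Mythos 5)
Your first part and your treatment of $k$ a power of $2,3,5,6,7,11,13$ are essentially sound and close in spirit to the paper (for powers of $3$ and $6$ you invoke Nagell's resolution of $x^2+x+1=y^q$ where the paper uses the result of Luca--Tengely--Togb\'e on $X^2+3=4Y^q$, and your elimination of the near-miss pairs $(m,n)=(18,2)$, $k=3$ and $(19,3)$, $k=6$ by checking the six roots of unity in $\Q(\zeta_3)$ matches the paper's computational check). The genuine gap is the case $k$ a power of $17$, $19$ or $23$. Your argument rests on the assertion that ``for $p\in\{3,5,7,11,13,17,19,23\}$ the Nagell--Ljunggren equation is completely solved''. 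That is false for $17,19,23$: the theorem of Bugeaud, Hanrot and Mignotte (Theorem~\ref{th:Bugeaud} in the paper) excludes solutions when $k$ is a multiple of $17$, $19$ or $23$ \emph{only for exponents $q\neq 17,19,23$ respectively}; the equations $\frac{x^{17}-1}{x-1}=y^{17}$, $\frac{x^{19}-1}{x-1}=y^{19}$, $\frac{x^{23}-1}{x-1}=y^{23}$ (the hard case $q\mid k$ of the Nagell--Ljunggren problem) are not resolved. So from $\Phi_k(m)=c^{q_1}$ with $q_1\geq 2$ you can only conclude that $q_1$ is a power of $17$ (resp.\ $19$, $23$), and your proof stops there.

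This residual case is exactly where the paper has to abandon the norm equation, which is lossy, and return to the exact relation \eqref{eq:mult_ind2}, $\zeta_k^j(-m+\zeta_k)^p=(-n+\zeta_k)^q$ with $p=1$ and $q\in\{17,19,23\}$. The paper expands $(-n+\zeta_k)^q$ in the integral basis $\{1,\zeta_k,\dots,\zeta_k^{\phi(k)-1}\}$ and compares it with $\zeta_k^j(-m+\zeta_k)$, which has only two (or few) nonzero coordinates: for $k=q$ one shows that suitable pairs of coefficient polynomials, e.g.\ $p_{k,0}(n)=(-n)^k+kn+1$ and $p_{k,1}(n)$, cannot vanish simultaneously because their gcd in $\Z[n]$ is $1$ (or $kn$), while for $k=q^{\ell}$ with $\ell>1$ a support argument shows the right-hand side has nonzero coefficients in positions the left-hand side cannot produce. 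Without an argument of this kind (or some other way to handle the open exponent-$q$ case of Nagell--Ljunggren), your proof of the second statement is incomplete for $k$ a power of $17$, $19$ or $23$.
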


In order to prove Theorem~\ref{th:mult_independent} one has to show
that the Diophantine equation
\begin{equation}\label{eq:mult_independent}
(-m+\zeta_k)^p=(-n+\zeta_k)^q
\end{equation}
has no solution $(n,m,p,q)$, with $m>n>C(k)$ and $p,q$ are not both
zero. Taking norms in \eqref{eq:mult_independent} we obtain
$\Phi_k(m)^p=\Phi_k(n)^q$, where $\Phi_k$ denotes the $k$-th
cyclotomic polynomial, and therefore the Diophantine
equation~\eqref{eq:mult_independent} becomes
\begin{equation}\label{eq:Norm2}
\Phi_k(m)^p=\eta^q
\end{equation}
with $\eta=\Phi_k(n)$. In case that $p$ and $q$ have
greatest common divisor $d>1$ we take $d$-th roots and obtain equation
\eqref{eq:Norm2} with $p$ and $q$ coprime. Therefore we may assume
that $p$ and $q$ are coprime and we deduce that
$\eta$ is a $p$-th power, i.e.  we get an equation of the form
$\Phi_k(m)=y^q$ with $y=\eta^{1/p}$.   Note that since we assume
$m,n>C(k)$ the case $\Phi_k(m)=-y^q$ can be excluded. Indeed, we know
that $\Phi_k(m)>0$ for $m$ sufficiently large. In any case we obtain a
Diophantine equation of the form
\begin{equation}\label{eq:Norm}
\Phi_k(m)=y^q.
\end{equation}
Equation \eqref{eq:Norm} is closely related to the well-studied Nagell-Ljunggren equation
\begin{equation}\label{eq:Nagell}
\frac{x^k-1}{x-1}=y^q\qquad x,y>1,\; q\geq 2, \; k>2.
\end{equation}
If $k$ is a prime, then the Diophantine equations \eqref{eq:Norm} and \eqref{eq:Nagell} are identical and every solution to \eqref{eq:mult_independent} yields 
a solution to \eqref{eq:Nagell}. Therefore we conjecture: 

\begin{conjecture}
Let $k$ be an odd prime. Then $-m+\zeta_k$ and $-n+\zeta_k$ are multiplicatively independent provided $m>n>0$.
\end{conjecture}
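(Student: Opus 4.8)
The plan is to reduce the conjecture, along the lines already sketched for equation~\eqref{eq:Norm}, to the Nagell--Ljunggren equation~\eqref{eq:Nagell} together with one extra compatibility condition, and then to invoke what is known about~\eqref{eq:Nagell}. Suppose \eqref{eq:mult_independent} had a solution with $m>n>0$ and $(p,q)\ne(0,0)$, and take norms to get $\Phi_k(m)^p=\Phi_k(n)^q$. Since $\Phi_k(m)\ge\Phi_k(2)>1$ and $\Phi_k(n)\ge\Phi_k(1)=k>1$, the exponents $p,q$ must be nonzero of the same sign; replacing $(p,q)$ by $(-p,-q)$ if necessary and taking $\gcd(p,q)$-th roots of this equation between positive integers, we may assume $p,q\ge 1$ and $\gcd(p,q)=1$. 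Unique factorization then yields an integer $z\ge 2$ with
\[
\Phi_k(m)=z^{q},\qquad \Phi_k(n)=z^{p},
\]
and $m>n$ forces $z^q>z^p$, hence $q>p\ge 1$ and in particular $q\ge 2$. So a counterexample produces a solution of \eqref{eq:Nagell} for the prime $k$ together with the additional requirement that the $q$-th root $z=\Phi_k(m)^{1/q}$ be itself a $p$-th root of another cyclotomic value $\Phi_k(n)$.

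First I would eliminate $n=1$: here $\Phi_k(1)=k$ gives $z^p=k$, which since $k$ is prime forces $p=1$, $z=k$ and $\Phi_k(m)=k^q$ with $q\ge 2$; but the lifting-the-exponent lemma gives $v_k(\Phi_k(m))=1$ when $m\equiv 1\pmod k$ and $v_k(\Phi_k(m))=0$ otherwise, so $v_k(\Phi_k(m))\le 1<q$, a contradiction. Hence $n\ge 2$. Next, any prime $\ell\mid q$ makes $\Phi_k(m)=z^q$ a perfect $\ell$-th power. For $\ell=2$, Ljunggren's solution of \eqref{eq:Nagell} with exponent $2$ leaves, among odd primes $k$, only $\Phi_5(3)=121=11^2$; then $p=1$ and we would need $\Phi_5(n)=11$ for a positive integer $n$, which is impossible. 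For $\ell=3$, the only solution is $\Phi_3(18)=343=7^3$; the prime $k=3$ is already covered by Theorem~\ref{th:mult_independent}, and in any case a direct computation shows $(-18+\zeta_3)\ne(-2+\zeta_3)^3$ even though $\Phi_3(18)=\Phi_3(2)^3$. In all remaining cases $\Phi_k(m)$ is a perfect $\ell$-th power with $\ell\ge 5$ prime, and one would bring in the results of Bugeaud, Mihailescu and others on \eqref{eq:Nagell} (and, when $m,n$ are large, the linear-forms-in-logarithms estimates underlying Theorem~\ref{th:mult_independent}) to exclude as many of these as possible.

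The main obstacle is that \eqref{eq:Nagell} is not completely solved: for a general prime exponent $\ell\ge 5$ and a general prime $k$ one cannot presently rule out $\Phi_k(m)=z^\ell$, and this is exactly what a full proof of the conjecture would require. Moreover, even a complete resolution of the Nagell--Ljunggren equation would not immediately close the argument: as the pair $(n,m)=(2,18)$ with $k=3$ shows, equality of norms does \emph{not} force \eqref{eq:mult_independent}, so every sporadic Nagell--Ljunggren solution that happens to be cyclotomically compatible would still have to be excluded by a direct computation in $\Z[\zeta_k]$ --- for instance by comparing a second embedding, or the coefficient of $\zeta_k$ in the binomial expansion of the two sides. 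For these reasons the statement is offered only as a conjecture.
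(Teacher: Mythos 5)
The statement you are addressing is stated in the paper only as a conjecture: the paper offers no proof of it, just the reduction via norms to the Nagell--Ljunggren equation \eqref{eq:Nagell} and the remark that the three known (and conjecturally only) solutions of \eqref{eq:Nagell} do not produce solutions of \eqref{eq:mult_independent}. Your proposal follows exactly this route, so there is no divergence of method to report; what you add are correct partial results. Your unique-factorization step giving $\Phi_k(m)=z^q$, $\Phi_k(n)=z^p$ with $q>p\geq 1$ is sound, the elimination of $n=1$ via $v_k(\Phi_k(m))\leq 1$ is correct, the treatment of even $q$ via Ljunggren's theorem (only $\Phi_5(3)=11^2$ among odd prime $k$, and $\Phi_5(n)=11$ has no solution) is correct, and the case $3\mid q$ reduces to $k=3$, which is indeed already covered by Theorem~\ref{th:mult_independent}. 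Your observation that equality of norms, as in $\Phi_3(18)=\Phi_3(2)^3$, does not by itself force \eqref{eq:mult_independent} is precisely the paper's own remark, and your suggestion to settle any sporadic compatible solution by comparing coefficients in the integral basis is in the spirit of the paper's Section~3 arguments for $k$ a power of $17$, $19$, $23$.

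The gap you name is genuine and is exactly why the statement remains a conjecture: for a general odd prime $k$ and a prime exponent $\ell\geq 5$ not covered by Theorem~\ref{th:Bugeaud}, the equation $\Phi_k(m)=z^\ell$ cannot currently be excluded, and no argument in the paper closes it either. Two small points would sharpen your write-up: first, for the primes $k\in\{3,5,7,11,13,17,19,23\}$ the conjecture is already a theorem in the paper (second part of Theorem~\ref{th:mult_independent}), so the open territory consists of the remaining odd primes; second, when you invoke Theorem~\ref{th:mult_independent} for large $m,n$, note that the constant $C(k)$ there is ineffectively large in practice and, more importantly, the first part of that theorem only handles $m>n>C(k)$, so it cannot be combined with the small-exponent cases to give the full range $m>n>0$. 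As a proof the proposal is therefore incomplete, but it is honest about this, and its partial reductions are correct and consistent with the paper's discussion.
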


Since it is widely believed that the Nagell-Ljunggren equation \eqref{eq:Nagell} has only the three solutions
\[\frac{3^5-1}{3-1}=11^2,\qquad \frac{7^4-1}{7-1}=20^2, \qquad \frac{18^3-1}{18-1}=7^3\]
this conjecture seems plausible. Note that none of these solutions to \eqref{eq:Nagell} implies a solution to \eqref{eq:mult_independent}. We guess that even 
more 
is true and believe that the following question has an affirmative answer.

\begin{question}
 Given $k>2$ are $-m+\zeta_k$ and $-n+\zeta_k$ multiplicatively independent provided $m>n>1$?
\end{question}

We excluded the case that $n=1$ since for $k=6$ we have $-1+\zeta_6=\zeta_3$ and $\zeta_3$ and $-m+\zeta_6$ are always multiplicatively 
dependent.

\section{Canonical number systems and the proof of Theorem \ref{th:bases}}

The main tool in our proof is the following result due to Peth{\H o}
\cite{pethoe1991:polynomial_transformation_and}.

\begin{lemma}[Peth\H{o} {\cite[Theorem 7.1]{pethoe1991:polynomial_transformation_and}}]\label{lem:pethoe}
  Let $P=\sum_{j=0}^dp_jX^j\in\Z[X]$ with $p_d=1$. If $0<p_{d-1}\leq
  p_{d-2}\leq\cdots\leq p_0$, $p_0\geq2$ and no root of $P$ is a root
  of unity, then the pair $(P,\mathcal{N})$ is a canonical number
  system.
\end{lemma}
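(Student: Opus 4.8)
The plan is to recast the canonical-number-system property through the backward division map and then to locate precisely where each of the two hypotheses on $P$ is used. Set $\mathcal R=\Z[X]/P(X)\Z[X]$. Since $P(0)=p_0\neq0$, the class of $X$ is a unit of $\mathcal R\otimes\Q=\Q[X]/P(X)\Q[X]$, hence a non-zero-divisor of $\mathcal R$, and $\mathcal R/X\mathcal R\cong\Z/p_0\Z$ because $P\equiv p_0\pmod X$. Thus every $\gamma\in\mathcal R$ has a unique expression $\gamma=a_0+X\,T(\gamma)$ with $a_0\in\mathcal N=\{0,1,\dots,p_0-1\}$ and $T(\gamma)\in\mathcal R$, which defines a map $T\colon\mathcal R\to\mathcal R$. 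Iterating gives $\gamma=a_0+a_1X+\cdots+a_{n-1}X^{n-1}+X^nT^n(\gamma)$, so $(P,\mathcal N)$ is a canonical number system exactly when $T^n(\gamma)=0$ for all large $n$ and all $\gamma$; a standard argument on orbits shows this is equivalent to the conjunction of (i) every $T$-orbit is finite and (ii) $0$ is the only $T$-periodic element.

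For (i) the first task is to show that $P$ is expanding. Multiplying $P(z)$ by $z-1$ and using $P(\beta)=0$ shows that every complex root $\beta$ of $P$ satisfies $p_0=\beta\,B(\beta)$, where $B$ is the polynomial built from the consecutive differences of the coefficients of $P$ (all non-negative by the monotonicity $0<p_{d-1}\le p_{d-2}\le\cdots\le p_0$) and one checks $B(1)=p_0$. If $|\beta|=1$, then equality holds throughout $p_0=|\beta B(\beta)|\le B(1)=p_0$, which pins the monomials of $B(\beta)$ to a common ray and forces $\beta^d=1$, so $\beta$ is a root of unity — excluded by hypothesis. Hence every root of $P$ has modulus $>1$. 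Now embed $\mathcal R$ as a full lattice in $\mathcal R\otimes\R$, using the archimedean places attached to the irreducible factors of $P$ (evaluation at the roots when $P$ is squarefree, and the obvious seminorms in general). The relation $T(\gamma)=(\gamma-a_0)/X$ gives, at the place corresponding to a root $\beta$, the estimate $|T(\gamma)|\le(|\gamma|+p_0-1)/|\beta|$, so $T$ strictly contracts every coordinate lying outside the ball of radius $(p_0-1)/(|\beta|-1)$; after finitely many steps every orbit is trapped in one fixed bounded set, whose intersection with the lattice $\mathcal R$ is finite. This proves (i) and also shows that every periodic element lies in that finite set.

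For (ii), unfolding $T^{\ell}\gamma=\gamma$ yields $(1-X^{\ell})\gamma=\sum_{j=0}^{\ell-1}a_jX^j$ in $\mathcal R$; since no root of $P$ is a root of unity, $1-X^{\ell}$ is invertible in $\mathcal R\otimes\Q$, so $\gamma$ is determined by the digit word, and evaluating at a root $\beta$ and summing a geometric series (using $|\beta|>1$) recovers the uniform bound $|\gamma(\beta)|\le(p_0-1)/(|\beta|-1)$ at every archimedean place. It then remains to show that this bounded set contains no nonzero $T$-cycle, which I would settle by an extremal argument: take a periodic $\gamma\neq0$ that is largest in a suitable norm and exploit the ordering $p_{d-1}\le\cdots\le p_0$ to control the coefficient vectors along the relations $\gamma_{t-1}=X\gamma_t+a^{(t-1)}$ as one runs around the cycle, until a contradiction emerges; for $d=1$ this reduces to the observation that $\pm1$ are the only candidates and each iterates to $0$. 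With (i) and (ii) in hand, every $\gamma\in\mathcal R$ has a terminating expansion, i.e.\ $(P,\mathcal N)$ is a canonical number system.

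The main obstacle is this last step, ruling out nonzero cycles inside the bounded region: it is precisely what fails for expanding polynomials that are not of monotone type, such as $X^2-2X+2$, so it genuinely needs the ordering of the $p_j$ and cannot be obtained from expansiveness alone. Handling reducible $P$ costs only the extra bookkeeping of working with seminorms on $\mathcal R\otimes\Q$ in place of a single number field and brings in no new idea; by comparison, the fact that unimodular roots of $P$ must be roots of unity is easy once the telescoping of the coefficient differences is noticed.
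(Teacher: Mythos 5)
You are proving a statement the paper itself does not prove: Lemma~\ref{lem:pethoe} is quoted as Peth\H{o}'s Theorem~7.1, so your attempt has to be judged on its own merits. Your framework is the right one (the backward division map $T$, and the reduction of the CNS property to: (i) every $T$-orbit is finite, (ii) $0$ is the only $T$-periodic point), and your expansivity argument via the telescoped polynomial $(z-1)P(z)$ is essentially correct; two small repairs are needed there (you must also exclude $|\beta|<1$, which the same estimate $p_0=|\beta|\,|B(\beta)|\le|\beta|B(1)<p_0$ does at once, and the equality case yields $\beta^{d-j}=1$ for some index $j<d$ with nonzero coefficient of $B$, not literally $\beta^d=1$). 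For (i), evaluations at the roots do not suffice when $P$ is not squarefree — their joint kernel meets the lattice in an infinite set — so one needs the standard adapted norm for the companion matrix (or jet/seminorm data including derivatives) to get a genuine contraction into a set with finite lattice intersection; this is routine but it is more than ``obvious seminorms.''

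The genuine gap is step (ii), and you name it yourself: you never prove that the only periodic element is $0$, you only announce an ``extremal argument \dots\ until a contradiction emerges.'' That step is not a detail — it is the entire substance of the lemma. Expansivity plus finiteness of orbits is cheap and holds for many non-CNS polynomials (your own example $X^2-2X+2$), and the hypothesis $0<p_{d-1}\le\cdots\le p_0$, $p_0\ge 2$ is used precisely and only to kill nonzero cycles. Closing it requires an actual argument: for instance, writing a periodic $\gamma$ in coordinates with respect to $1,x,\dots,x^{d-1}$, making explicit how $T$ acts on the coordinate vector (a shift together with a carry determined by $\lfloor z_0/p_0\rfloor$), and using the coefficient ordering to force every coordinate of a periodic point to vanish — this is the content of Kov\'acs' and Peth\H{o}'s proofs, and of Brunotte's later simplification via a finite set of witnesses stable under the digit map. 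As written, your proposal establishes only that $P$ is expanding and that all cycles live in a fixed finite set; without the missing argument the lemma is not proved.
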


Since $\zeta_k$ is a primitive $k$-th root of unity $-m+\zeta_k$ is a
root of $P(X)=\Phi_k(m+X)=\sum_{d=0}^{\phi(k)}p_dX^d$. We have
$p_{\phi(k)}=1$ and for $d=1,\ldots,\phi(k)$, by Vieta's
formula we obtain
\begin{equation}\label{eq:coeff} p_{\phi(k)-d}=\tilde
  p_d=\sum_{a_1<\cdots<a_d\in(\Z/k\Z)^*}(m-\zeta_k^{a_1})\cdots(m-\zeta_k^{a_d}).
\end{equation}
Here we identify elements of $\Z/k\Z$ with their minimal
representatives in $\Z$, i.e. with the integers in the set
$\{0,1,\dots, k-1\}$ in the obvious way. 

Before checking the requirements of Lemma \ref{lem:pethoe} we provide
general lower and upper bounds for $\tilde p_{d}$. Since
$$\left|\Arg(m-\zeta_k^a)\right|\leq \arctan\left(\frac 1{m-1}\right)\leq\frac1{m-1}$$
we have that
\[\left|\Arg\left((m-\zeta_k^{a_1})\cdots(m-\zeta_k^{a_d})\right)\right|
\leq\frac d{m-1}\leq 1.\]

Therefore we have
$$\mathrm{Re}\left((m-\zeta_k^{a_1})\cdots(m-\zeta_k^{a_d})\right)\geq (m-1)^d \cos(1)\geq \tfrac12 (m-1)^d,$$
which together with \eqref{eq:coeff} yields the bounds
\begin{equation}\label{eq:bounds}\tfrac12(m-1)^d\binom{\phi(k)}{d}\leq\tilde
  p_d\leq(m+1)^d\binom{\phi(k)}{d}.\end{equation}

Now we use these bounds in order to check the requirements of Lemma
\ref{lem:pethoe}. We start by showing that $p_0=\tilde p_{\phi(k)}\geq
2$. Using the lower bound of \eqref{eq:bounds} we get that
\[p_0=\tilde p_{\phi(k)}\geq\frac{(m-1)^{\phi(k)}}2\geq2\]
provided that $k,m\geq3$ which we supposed.

In the next step we recursively prove that $\tilde p_d\leq\tilde
p_{d+1}$ for $d=1,\ldots,\phi(k)-1$. For a given ordered $(d+1)$-tuple
$a_1<\cdots<a_{d+1}$ there are $d+1$ possibilities to deduce an
ordered $d$-tuple. Thus
\begin{multline*}(d+1)\sum_{a_1<\cdots<a_{d+1}\in(\Z/k\Z)^*}(m-\zeta_k^{a_1})\cdots(m-\zeta_k^{a_{d+1}})\\
=\sum_{a_1<\cdots<a_{d}\in(\Z/k\Z)^*}(m-\zeta_k^{a_1})\cdots(m-\zeta_k^{a_{d}})\sum_{\substack{b\in(\Z/k\Z)^*\\b\neq 
a_1,\ldots,a_d}}(m-\zeta_k^b)
\end{multline*}

Therefore we get for $d<\phi(k)$ that
\begin{align*}
\tilde p_{d+1}
=&\sum_{a_1<\cdots<a_{d+1}\in(\Z/k\Z)^*}(m-\zeta_k^{a_1})\cdots(m-\zeta_k^{a_{d+1}})\\
=&\frac{1}{d+1}\sum_{a_1<\cdots<a_{d}\in(\Z/k\Z)^*}(m-\zeta_k^{a_1})\cdots(m-\zeta_k^{a_{d}})\sum_{\substack{b\in(\Z/k\Z)^*\\ b\neq 
a_1,\ldots,a_d}}(m-\zeta_k^b)\\
=&\frac{m(\phi(k)-d)}{d+1}\sum_{a_1<\cdots<a_{d}\in(\Z/k\Z)^*}(m-\zeta_k^{a_1})\cdots(m-\zeta_k^{a_{d}})\\
&-\frac{1}{d+1}\sum_{a_1<\cdots<a_{d}\in(\Z/k\Z)^*}(m-\zeta_k^{a_1})\cdots(m-\zeta_k^{a_{d}})\sum_{\substack{b\in(\Z/k\Z)^*\\ b\neq 
a_1,\ldots,a_d}}\zeta_k^b\\
\geq& \frac{\tilde p_dm(\phi(k)-d)}{d+1}\\
&\quad-\frac{1}{d+1}\left|\sum_{a_1<\cdots<a_{d}\in(\Z/k\Z)^*}(m-\zeta_k^{a_1})\cdots(m-\zeta_k^{a_{d}})\sum_{\substack{b\in(\Z/k\Z)^*\\
b\neq a_1,\ldots,a_d}}\zeta_k^b \right|\\
\geq & \frac{\tilde p_dm(\phi(k)-d)}{d+1}-\binom{\phi(k)}{d}\frac{(m+1)^d(\phi(k)-d)}{d+1}.
\end{align*}
Note that in the first four lines in the inequality above all the sums
over complex terms yield indeed real numbers. Further the ``$\geq$''
signs are true due to the triangle inequality.

By our lower bound in \eqref{eq:bounds} we get that
\[\binom{\phi(k)}{d}\frac{(m+1)^d}{\tilde p_d}\leq2\left(\frac{m+1}{m-1}\right)^d=2\left(1+\frac{2}{m-1}\right)^d<2e^2,\]
where we have used that $m-1\geq\phi(k)>d$ by our assumption. Altogether we
obtain the inequality
\[\tilde p_{d+1}> \frac{\tilde p_d(m-2e^2)(\phi(k)-d)}{d+1},\]
which implies that the coefficients are increasing if 
\begin{equation}\label{eq:bound-m-d}
\frac{(m-2e^2)(\phi(k)-d)}{d+1}\geq1.
\end{equation}

We split our considerations into two cases whether $d\leq\phi(k)-2$ or not.
\begin{itemize}
\item\textbf{Case 1: $d\leq\phi(k)-2$.} Together with our assumption
  that $m\geq\phi(k)+1$ we get that the coefficients $\tilde p_d$ up
  to $\tilde p_{\phi(k)-1}$ are increasing
  if $$\frac{2(\phi(k)+1-2e^2)}{\phi(k)-1}\geq1.$$ Since $\phi(k)$ only
  takes positive values a simple calculation yields that this is the
  case for $\phi(k)\geq 4e^2-3\approx 26.56$.

  On the contrary, we may assume $\phi(k)\leq 26$, which implies
  $d\leq24$. Plugging this into \eqref{eq:bound-m-d} yields that the
  coefficients are increasing provided $m\geq e^2+\frac{25}2\approx
  19.89$.

  Finally computing all polynomials $\Phi_k(m+x)$ with $\phi(k)+1\leq
  m\leq 19$ by aid of a computer algebra system we see that in any
  case the coefficients $\tilde p_d$ are increasing. In fact we only
  have to check 300 polynomials, which are certainly to many for
  writing them down.
\item\textbf{Case 2: $d=\phi(k)-1$.} Thus we are left to show that
  $p_1\leq p_0$, i.e.
\[\prod_{a \in (\Z/k\Z)^*}(m-\zeta_k^a) \geq \sum_{b \in (\Z/k\Z)^*}\prod_{\substack{a \in (\Z/k\Z)^*\\ a\neq b}}(m-\zeta_k^a).\]
Since
\begin{align*}
1&\geq \frac{\phi(k)}{m-1}
  =\sum_{b \in (\Z/k\Z)^*}\frac{1}{m-1}
  \geq\sum_{b \in (\Z/k\Z)^*}\frac{1}{\left| m-\zeta_k^b\right|}\\
&\geq\left|\sum_{b \in (\Z/k\Z)^*}\frac{1}{m-\zeta_k^b}\right|
=\sum_{b \in (\Z/k\Z)^*}\frac{1}{m-\zeta_k^b}=\frac{p_1}{p_0}
\end{align*}
provided that $m\geq\phi(k)+1$, this case is settled too.
\end{itemize}

Finally the last requirement, in order to apply Lemma
\ref{lem:pethoe}, is to show that no root of $\Phi_k(m+X)$ is a root
of unity. Since we assume $m\geq \phi(k)+1\geq 3$. So all roots of
$\Phi_k(m+X)$ have real part $\geq 2$ and therefore cannot be roots of
unity.

Therefore $\Phi_k(X+m)$ satisfies all assumptions of Lemma
\ref{lem:pethoe} and Theorem \ref{th:bases} is proved.

\begin{remark}
  In case that $m=\phi(k)$ both situations $p_1\leq p_0$ and $p_1>p_0$
  can occur. For instance, in the case $k=11$ and $m=\phi(k)=10$ we
  obtain $p_1<p_0$ and in the case $k=22$ and $m=\phi(k)=10$ we obtain
  $p_1>p_0$. Therefore in view of the method of proof Theorem
  \ref{th:bases} is best possible.
\end{remark}

\section{Multiplicative independence}

Before we start with the proof of Theorem \ref{th:mult_independent}, let us note that if $p$ and $q$ have a
common factor $d$, then by taking $d$-th roots on both sides of equation \eqref{eq:mult_independent} we obtain
\begin{equation}\label{eq:mult_ind2}
 \zeta_k^j(-m+\zeta_k)^p=(-n+\zeta_k)^q,
\end{equation}
with  $\gcd(p,q)=1$ and $0\leq j<k$.

We start with the proof of the first statement of Theorem
\ref{th:mult_independent}. In view of the discussion below Theorem \ref{th:bases} we consider the Diophantine
equation
\begin{equation}\label{eq:Supper_elliptic} f(x)=y^q,\end{equation}
where $f$ is a polynomial with rational coefficients and with at least
two distinct roots. Obviously \eqref{eq:Norm} fulfills these
requirements as soon as $k>2$.  Due to Schinzel and Tijdeman
\cite{Schinzel:1976} (see also \cite[Theorem 10.2]{Shorey:Dioph}) we
know that for all solutions $(x,y,q)$ to \eqref{eq:Supper_elliptic}
with $|y|>1$, $q$ is bounded by an effectively computable constant
depending only on $f$. In view of Theorem \ref{th:mult_independent} we
see that all solutions coming from~\eqref{eq:mult_independent} yield
$y>1$ provided $n\neq 0$. But it is also well known (see
e.g. \cite[Theorem 6.1 and 6.2]{Shorey:Dioph}) that for given $q\geq
2$ all solutions $(x,y)$ to \eqref{eq:Supper_elliptic} satisfy
$\max\{|x|,|y|\}<C_{f,q}$, where $C_{f,q}$ is an absolute, effectively
computable constant depending on $f$ and $q$. Thus we conclude that
$q=1$ in equation \eqref{eq:mult_ind2} if $m,n>C(k)$, and $C(k)$ is an
effectively computable constant only depending on $k$. Exchanging the
roles of $m$ and $n$ we deduce that $q=p=1$ if $m,n>C(k)$. But this
yields the equation $\zeta_k^j(-m+\zeta_k)=(-n+\zeta_k)$ and taking
norms on both sides gives $\Phi_k(m)=\Phi_k(n)$. Since $\Phi_k(m)$ is
strictly increasing for $m$ large enough we deduce that $m=n$ which we
excluded. Therefore the proof of the first part of
Theorem~\ref{th:mult_independent} is complete.

However, computing the constant $C(k)$ using the methods cited above
will yield very huge constants even for small values of $k$. Therefore
we will use a more direct approach in the proof of the second part.

We note that if $q$ is the product of all different prime divisors of
$n$ (its radical), then $\Phi_n(X)=\Phi_{q}(X^{n/q})$. In
particular, if $n=q^\ell$, then we have
\begin{equation}\label{eq:PowerRed}
\Phi_{q^{\ell}}(X)=\Phi_{q}\left(X^{q^{\ell -1}}\right). 
\end{equation}
Let us start with powers of $2$. In this case we have that
$\Phi_{2^\ell}(X)=X^{2^{\ell -1}}+1$. Therefore the case that $k$ is a
power of $2$ is deduced from the fact that the Diophantine equation
$$X^2+1=Y^q$$
has no solution. This is a special case of Catalan's equation
$X^p-Y^q=1$ which was completely solved by Mih{\v{a}}ilescu
\cite{Mihailescu:2004}. Note that this special case was already proved
by Chao \cite{Chao:1965}.

In the next step we consider the case that $k$ is a power of $3$ or $6$.

\begin{lemma}\label{lem:k=3}
Let $k$ be a power of $3$ or $6$. Then the two algebraic integers $-m+\zeta_k$ and 
$-n+\zeta_k$ are multiplicatively independent provided $n\neq m$ and $n,m\not\in\{-1,0,1\}$.
\end{lemma}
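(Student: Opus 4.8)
The plan is to reduce the claim, via the power-reduction formula \eqref{eq:PowerRed}, to a statement about the Diophantine equation $\Phi_3(x) = x^2 + x + 1 = y^q$ (and the closely related $\Phi_6(x) = x^2 - x + 1 = y^q$), and then to rule out nontrivial solutions of these superelliptic equations by elementary means. First I would take norms in \eqref{eq:mult_ind2}, as in the discussion below Theorem \ref{th:bases}, to arrive at $\Phi_k(m) = y^q$ with $y = \Phi_k(n)^{1/p}$, $\gcd(p,q)=1$, $q\geq 2$; the case $q=1$ has already been handled there and forces $m=n$. If $k = 3^\ell$ or $k = 2\cdot 3^\ell$ with $\ell\geq 2$, then by \eqref{eq:PowerRed} we have $\Phi_k(X) = \Phi_3(X^{3^{\ell-1}})$ or $\Phi_6(X^{3^{\ell-1}})$, so a solution to $\Phi_k(m) = y^q$ is already a solution to $\Phi_3(M) = y^q$ or $\Phi_6(M) = y^q$ with $M = m^{3^{\ell-1}}$; hence it suffices to treat $k = 3$ and $k = 6$.

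For $k=3$ the equation is $m^2 + m + 1 = y^q$ with $q\geq 2$. Completing the square, $4y^q = (2m+1)^2 + 3$, so $(2m+1)^2 = 4y^q - 3$. I would split on the parity/primality of $q$. If $q$ is even, say $q = 2r$, then $(2m+1)^2 + 3 = (2y^r)^2$ gives two squares differing by $3$, forcing $2m+1 = \pm 1$ and hence $m \in \{0,-1\}$, which is excluded. If $q$ is an odd prime, one works in $\Z[\omega]$ (with $\omega = \zeta_3$): since $m^2+m+1 = \mathrm{N}_{\Q(\omega)/\Q}(m - \omega) = (m-\omega)(m-\bar\omega)$ and $\Z[\omega]$ is a PID, a coprimality argument (the gcd of $m-\omega$ and $m-\bar\omega$ divides $\omega - \bar\omega$, which has norm $3$, so the only possible common factor is the prime above $3$, handled separately when $3\mid m^2+m+1$, i.e. $m\equiv 1\pmod 3$) shows $m - \omega = \text{unit}\cdot\pi^q$ for some $\pi\in\Z[\omega]$; expanding $\pi^q$ and comparing the coefficient of $\omega$ (which must be $1$, an integer of absolute value $1$) pins down $\pi$ and forces $m \in\{-1,0,1\}$. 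The $k=6$ case $m^2 - m + 1 = y^q$ is entirely analogous — it is $\mathrm{N}_{\Q(\zeta_6)/\Q}(m - \zeta_6)$, or one simply substitutes $m \mapsto -m$ to transform $m^2 - m + 1$ into $m^2 + m + 1$ — and leads to the same excluded values.

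The main obstacle is the odd-prime case, where one must show the factor-of-$3$ ramified prime cannot contribute: when $3 \mid m^2+m+1$ one has $3 \| m^2 + m + 1$ unless $m\equiv 1 \pmod 9$, and a clean way to dispose of this is to note $3 \nmid q$ would then contradict $q\mid v_3(y^q)=v_3(m^2+m+1)\in\{0,1\}$ unless $q=1$, while $3\mid q$ reduces to the case $q=3$, i.e. $m^2+m+1 = y^3$, which has only $m\in\{-1,0\}$ (again by the PID factorization, now with the cube). A possible alternative, avoiding case analysis on $q$ altogether, is to cite the theorem of Nagell that $x^2 + x + 1 = y^q$ has no solutions with $x>1$, $q\geq 2$; but since the paper's philosophy is to keep the argument self-contained and effective, I would prefer to give the short direct proof sketched above and only invoke Catalan/Mih\u ailescu or Nagell if the elementary route runs long.
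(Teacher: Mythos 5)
Your skeleton (reduce via \eqref{eq:PowerRed} to the superelliptic equations $M^2+M+1=y^q$ and $M^2-M+1=y^q$, then solve these) matches the paper's, but the decisive step is wrong. The equation $m^2+m+1=y^q$ \emph{does} have nontrivial solutions: $(m,y,q)=(18,7,3)$ and $(-19,7,3)$, which is exactly the sporadic Nagell--Ljunggren solution $\frac{18^3-1}{18-1}=7^3$; likewise $m^2-m+1=7^3$ for $m=19$ and $m=-18$. So your claim that the factorization $m-\omega=\mathrm{unit}\cdot\pi^q$ in $\Z[\omega]$ ``pins down $\pi$ and forces $m\in\{-1,0,1\}$'' cannot be right for $q=3$: for $m=18$ the element $18-\omega$ really is a unit times a cube, and the coefficient comparison you sketch has a genuine solution. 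The fallback you cite is misquoted as well: Nagell's theorem says that $x=18$, $y=7$, $q=3$ is the \emph{only} solution of $x^2+x+1=y^q$ with $x>1$, $q\geq2$, not that there are none; the same error recurs in your $3\mid q$ sub-case, where you assert $m^2+m+1=y^3$ has only $m\in\{-1,0\}$.

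Because of this, your argument omits the step that actually carries the paper's proof. There, the equation $X^2+3=4Y^q$ is solved by citing Luca, Tengely and Togb\'e, whose solution list contains the sporadic $(X,Y,q)=(37,7,3)$; one then observes that $18$ and $19$ are not perfect powers (so only $k=3$ and $k=6$, not higher powers of $3$ or $6$, can occur), and finally one returns to \eqref{eq:mult_independent}/\eqref{eq:mult_ind2} and checks by a finite computation that $\zeta_k^j(-m+\zeta_k)=(-n+\zeta_k)^3$ has no solution for $m\in\{18,-19\}$, $n\in\{2,-3\}$ when $k=3$, and $m\in\{-18,19\}$, $n\in\{-2,3\}$ when $k=6$ (these $n$ being forced by $\Phi_k(n)=7$). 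Your proposal never reaches this verification because it (incorrectly) concludes that no solution with $|m|>1$ exists. A corrected elementary route is possible, but only if the $\Z[\omega]$ descent (or a correct citation of Nagell) is allowed to surface the exceptional $m=\pm18,\pm19$, followed by the same finite check; as a side remark, the case $p=q=1$ also deserves a word here, since for the quadratics $\Phi_3,\Phi_6$ the norm identity $\Phi_k(m)=\Phi_k(n)$ admits $n=-m-1$ (resp.\ $n=1-m$) in addition to $n=m$, which your appeal to strict monotonicity does not cover when negative $m,n$ are permitted.
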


\begin{proof}
  Let us consider equation \eqref{eq:Norm} and let $M=m^{k/3}$ if $k$
  is a power of $3$ and $M=m^{k/6}$ if $k$ is a power of $6$. Then
  equation \eqref{eq:Norm} turns into
  \[M^2\pm M+1=\left(\frac{2M\pm 1}2\right)^2+\frac 34=Y^q,\] where
  the ``$+$'' sign holds in case that $k$ is a power of $3$ and the
  ``$-$'' sign holds in case that $k$ is a power of $6$.  Multiplying
  this equation by $4$ and putting $X=2M\pm 1$ we get
  \[X^2+3=4Y^q.\] Due to Luca, Tengely and Togbe\cite{Luca:2009} we know
  that $(X,Y,q)=(1,1,q)$ and $(37,7,3)$ are the only solutions with
  $X,Y,q\geq 1$. The first solution yields $m\in\{-1,0,1\}$ which we
  have excluded and the second solution yields $M=\pm 18$ or $M=\pm
  19$. Since both, $18$ and $19$, are not perfect powers we conclude
  that $k=3$ or $k=6$.  We may recover $m$ and $n$ from the
  substitutions that led to equation \eqref{eq:Norm}. Therefore we are
  left to check that in case that $k=3$ equation
  \eqref{eq:mult_independent} has no solution with $m=18,-19$,
  $n=2,-3$, $q=3$ and $p=1$ and in case that $k=6$ equation
  \eqref{eq:mult_independent} has no solution with $m=-18,19$,
  $n=-2,3$, $q=3$ and $p=1$. This is of course easily done e.g. by a
  computer.
\end{proof}

We are left to the case that $k$ is a power of $5,7,11,13,17,19$ or
$23$. Since every solution to \eqref{eq:mult_independent} implies a
solution to the Nagell-Ljunggren equation \eqref{eq:Nagell}, the
following results due to Bugeaud, Hanrot and Mignotte \cite[Theorem 1
and 2]{Bugeaud:2002} are essential in the proof of Theorem
\ref{th:mult_independent}.

\begin{theorem}\label{th:Bugeaud}
The Nagell-Ljunggren equation \eqref{eq:Nagell} has except the three solutions $(x,y,k,q)=(3,11,5,2),(7,20,4,2),(18,7,3,3)$ no further solution in case that
 \begin{itemize}
  \item $k$ is a multiple of $5,7,11$ or $13$, or
  \item $k$ is a multiple of $17,19$ or $23$ and $q\neq 17,19$ or $23$ respectively.
 \end{itemize}
\end{theorem}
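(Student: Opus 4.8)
We read this as a statement about the \emph{prime-index} Nagell--Ljunggren equation, to be attacked by cyclotomic descent together with lower bounds for linear forms in logarithms; the restriction on $q$ in the second item marks exactly where this strategy currently fails. First reduce the exponent: replacing $y^{q}$ by $(y^{q/\ell})^{\ell}$ for a prime $\ell\mid q$, we may assume $q$ is prime. Next reduce the index: if $p\in\{5,7,11,13,17,19,23\}$ divides $k$, write $k=pk'$ and
\[
\frac{x^{k}-1}{x-1}=\frac{(x^{k'})^{p}-1}{x^{k'}-1}\cdot\frac{x^{k'}-1}{x-1}=:A\cdot B;
\]
here $\gcd(A,B)$ divides $p$ and a Lifting-the-Exponent computation gives $v_{p}(A)\le 1$, so $A$ is either a perfect $q$-th power or $p$ times one (the latter forcing $p\mid x^{k'}-1$). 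Thus, writing $X=x^{k'}$, it suffices to rule out
\begin{equation}\label{eq:NLprime}
\frac{X^{p}-1}{X-1}=z^{q}\qquad\text{or}\qquad\frac{X^{p}-1}{X-1}=pz^{q},\qquad X,z>1,\ q\ \text{prime},
\end{equation}
for $p\in\{5,7,11,13\}$ with $q$ arbitrary and for $p\in\{17,19,23\}$ with $q\ne p$, beyond the solutions tracing back to the three exceptional solutions of \eqref{eq:Nagell} (which have index $k\in\{3,4,5\}$ and will reappear at the end).

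Now work in $\Z[\zeta_{p}]$, where $\frac{X^{p}-1}{X-1}=\prod_{i=1}^{p-1}(X-\zeta_{p}^{i})$. Any prime dividing two of these factors divides the unique prime $\mathfrak p$ above $p$, and $\mathfrak p$ divides a factor only when $p\mid X-1$; in that case a short valuation count at $\mathfrak p$ (using $(p)=\mathfrak p^{\,p-1}$) brings the second alternative of \eqref{eq:NLprime} into a parallel form, so assume the $(X-\zeta_{p}^{i})$ are pairwise coprime ideals. Then each is a perfect $q$-th power of an ideal, $(X-\zeta_{p}^{i})=\mathfrak a_{i}^{q}$. The class numbers are minuscule here --- $h_{p}=1$ for $p\le 19$ and $h_{23}=3$ --- so whenever $\gcd(q,h_{p})=1$, which covers every case we must treat (the residual case $p=23$, $q=3$ being absorbed into the classical treatment of $q=3$), $\mathfrak a_{1}$ is principal and
\[
X-\zeta_{p}=u\,\alpha^{q}
\]
for a unit $u$ and some $\alpha\in\Z[\zeta_{p}]$; modifying $\alpha$ by a real cyclotomic unit we may assume $u=\pm\zeta_{p}^{j}\varepsilon$ with $\varepsilon$ a real unit of bounded height.

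Dividing the last relation by its complex conjugate gives
\[
\frac{X-\zeta_{p}}{X-\zeta_{p}^{-1}}=\pm\,\zeta_{p}^{\,j'}\Bigl(\tfrac{\alpha}{\bar\alpha}\Bigr)^{q},
\]
whose left-hand side differs from $1$ by $O(1/X)$. Hence the right-hand side is an algebraic number of prescribed shape lying extremely close to $1$, and taking a principal logarithm converts this into an upper bound for a linear form in at most three logarithms --- those of $\alpha/\bar\alpha$, of $\zeta_{p}$ and of $-1$ --- whose integer coefficients are controlled by $p$ and $q$. Comparing with an effective lower bound for such forms (Baker--W\"ustholz, or the sharper two- and three-logarithm estimates of Laurent--Mignotte--Nesterenko) yields $q\le C_{0}(p)$ and then $X\le C_{1}(p)$ with explicit constants; the cases $q=2$ and $q=3$ are disposed of separately by the classical work of Ljunggren and Nagell, and a direct search over the finitely many remaining pairs $(X,q)$ produces only the solutions $(x,y,k,q)=(3,11,5,2),(7,20,4,2),(18,7,3,3)$, whose indices $5,4,3$ meet the excluded range only through the exception already recorded.

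The main obstacle is the case $q=p$. There the root of unity $\zeta_{p}^{\,j'}$ above cannot be absorbed into a $q$-th power, since $\zeta_{p}$ is not a $p$-th power in $\Q(\zeta_{p})$; the linear form then genuinely involves $\log\zeta_{p}$ and does not collapse to two logarithms, and because the exponent now equals $p$ the parameters entering the bound are no longer controlled by the geometry of the equation. Equivalently, the Kummer extension $\Q(\zeta_{p},z^{1/p})/\Q(\zeta_{p})$ is wildly ramified at $\mathfrak p$, so the clean descent of the second step is contaminated by the prime above $p$. For $p\le 13$ the auxiliary equations produced by Ljunggren-type explicit descent still have small enough degree to be solved unconditionally, which is why no hypothesis on $q$ is needed there; for $p\in\{17,19,23\}$ with $q=p$ the resulting bound on $q$ and $X$ is too weak to finish by computation, and this is precisely the gap reflected in the statement.
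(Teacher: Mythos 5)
The paper does not prove this statement at all: Theorem~\ref{th:Bugeaud} is quoted as a known result of Bugeaud, Hanrot and Mignotte \cite[Theorems 1 and 2]{Bugeaud:2002} and is used as a black box in the proof of Theorem~\ref{th:mult_independent}. So your attempt has to be measured against the actual Bugeaud--Hanrot--Mignotte proof, and there it falls short. Your opening reductions are fine and standard: reducing to prime exponent $q$, splitting $\frac{x^k-1}{x-1}=A\cdot B$ with $\gcd(A,B)\mid p$ and $v_p(A)\le 1$ to land on $\frac{X^p-1}{X-1}=z^q$ or $pz^q$, the cyclotomic factorization, the small class numbers, and the relation $X-\zeta_p=u\alpha^q$ with $u$ a root of unity times a real unit.

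The genuine gap is the concluding step. A three-logarithm lower bound does give $q\le C_0(p)$ and then, for each fixed $q$, a bound $X\le C_1(p,q)$, but these Baker-type constants are astronomically large, so ``a direct search over the finitely many remaining pairs $(X,q)$'' is not a feasible proof step and cannot deliver the sharp conclusion that \emph{only} the three listed solutions exist --- which is precisely the deep content of the cited theorems. The published proof needs much more than this outline: the classical results of Ljunggren and Nagell for $q=2,3$, sharp estimates for linear forms in \emph{two} logarithms, Mihailescu-type ($q$-primary / double-Wieferich) divisibility criteria, and class-group and Stickelberger-ideal arguments in $\Q(\zeta_p)$, combined with substantial but feasible computations; the difficulty is not confined to the case $q=p$, as your last paragraph suggests. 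Likewise, the claim that ``for $p\le 13$ the auxiliary equations produced by Ljunggren-type explicit descent still have small enough degree to be solved unconditionally'' is asserted without justification and is not how those cases are settled. As written, your text is a plausible strategy sketch with the decisive quantitative and algebraic ingredients missing; in the context of this paper the correct move is simply to cite \cite{Bugeaud:2002}, as the authors do.
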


Using identity \eqref{eq:PowerRed} we get in case that $k=P^\ell$ is a prime power
\[\Phi_{k}(X)=\frac{\left(X^{P^{\ell-1}}\right)^P -1}{\left(X^{P^{\ell-1}}\right)-1}=\frac{x^P-1}{x-1}\]
with $x=X^{P^{\ell -1}}$. If we assume now that $P=5,7,11$ or $13$,
then the only possible solutions to \eqref{eq:mult_ind2} imply that
$q=1$. Exchanging the role of $m$ and $n$ we also obtain $p=1$ and
equation \eqref{eq:mult_ind2} turns into
$$\zeta_k^j(-m+\zeta_k)=-n+\zeta_k.$$
Taking norms yields
$$\frac{m^k-1}{m^{k/P}-1}=\frac{n^k-1}{n^{k/P}-1},$$
hence $m=n$, which we excluded. Therefore we are left with the case
that $k$ is a power of $17,19$ or $23$.

Before we consider the remaining cases we want to remind the reader
that an integral basis for $\Z[\zeta_k]$ is given by
$\{1,\zeta_k,\ldots,\zeta_k^{\phi(k)-1}\}$ and therefore every
algebraic integer $\alpha\in\Z[\zeta_k]$ has a unique representation
of the form
\begin{equation}\label{eq:Rep}
\alpha=\sum_{i=0}^{\phi(k)-1} a_i\zeta_k^i, 
\end{equation}
with $a_i\in\Z$ for $0\leq i<\phi(k)$. In the following we will expand
the left and right side of equation \eqref{eq:mult_ind2} in various
cases and bring the result into the unique form \eqref{eq:Rep}. We will
show below that on the left and right side of \eqref{eq:mult_ind2} the
unique representations~\eqref{eq:Rep} do not match and conclude that
no solution to \eqref{eq:mult_ind2} and hence no solution to
\eqref{eq:mult_independent} exists.

Assume that \eqref{eq:mult_ind2} has a solution $(m,n,p,q,j)$. In case that $k$ is a power of $17,19$ or $23$ Theorem \ref{th:Bugeaud} implies that the only 
prime divisors of $p$ and $q$ in equation \eqref{eq:mult_independent} are $17,19$ 
or $23$ respectively. Therefore we are left to equation \eqref{eq:mult_ind2}, with $p=1$ and $q=17,19$ or $23$. Note that in case of $k=q$ we have
\begin{align*}
(-n+\zeta_k)^k=&\sum_{i=0}^{k} (-n)^{k-i}\binom{k}{i} \zeta_k^{i}\\
=&\sum_{i=0}^{k-2} \left((-n)^{k-i}\binom{k}{i}+kn\right)\zeta_k^i +1=\sum_{i=0}^{k-2} p_{k,i}(n)\zeta_k^i.
\end{align*}

Let us assume that $0\leq j< k-2$. Then with $p=1$ and $k=q$, either the coefficients $p_{k,0}(n)$ and 
$p_{k,1}(n)$ have to 
vanish simultaneously or the coefficients  $p_{k,3}(n)$ and $p_{k,4}(n)$ vanish simultaneously. Since
\[\gcd(p_{17,0}(n),p_{17,1}(n))=\gcd(-n^{17} + 17 n + 1, 17 n^{16} + 17 n)=1 \]
as polynomials in $n$, the coefficients cannot vanish simultaneously. Similarly we obtain
\begin{gather*}
\gcd(p_{19,0}(n),p_{19,1}(n))=\gcd(p_{23,0}(n),p_{23,1}(n))=1\\
\gcd(p_{17,3}(n),p_{17,4}(n))=17n, \quad \gcd(p_{19,3}(n),p_{19,4}(n))=19n,\\
\gcd(p_{23,3}(n),p_{23,4}(n))=23n,
\end{gather*}
where we consider the $\gcd$ taken over the polynomial ring $\Z[n]$. Therefore we deduce $n=0$, a solution which we excluded.

Now consider the case that $k=q$ and $j=k-2$. In this case equation \eqref{eq:mult_ind2} turns into
\[-1-\zeta_k-\cdots-(m+1)\zeta_k^{k-2n}=(-n+\zeta_k)^k=\sum_{i=0}^{k-2} p_{k,i}(n)\zeta_k^i.\]
Considering the coefficient of $\zeta_k$ we have
\[-1=kn(1+n^{k-2})\]
which cannot hold if $n>1$.
Similarly for $j=k-1$ we obtain
\[(m+1)+m\zeta_k+\cdots+m\zeta_k^{k-2}=(-n+\zeta_k)^k=\sum_{i=0}^{k-2} p_{k,i}(n)\zeta_k^i.\]
And in particular the coefficients of $\zeta_k$ and $\zeta_k^2$ have to be equal, i.e.
\[kn+kn^{k-1}=-\frac{k(k-1)}2 n^{k-2}+kn,\]
or equivalently
\[0=kn^{k-2}(2n+k-1).\] The last equation can only hold if $n$ or $k$
is zero or $2n+k-1=0$ which implies $n=-\frac{k-1}2<0$. Since all these
options are excluded this case cannot occur.

Now let us consider the case that $k=q^\ell$, with $\ell>1$, and $q=17,19$ or $23$. If $j<\phi(k)-1$ in \eqref{eq:mult_ind2} then all coefficients $\zeta_k^i$ 
with $0\leq i <\phi(k)$ of $\zeta_k^j(-m+\zeta_k)$ vanish except exactly two. Indeed we have
\[\zeta_k^j(-m+\zeta_k)=-m\zeta_k^j+\zeta_k^{j+1}.\]
On the right side of \eqref{eq:mult_ind2} all the coefficients of $\zeta_k^i$ with $0\leq i \leq q$ do not vanish, i.e. a contradiction.

In case that $j=\phi(k)+r$ for some $0\leq r < k-\phi(k)-1=q^{\ell -1}-1$ we have
\begin{multline*}
(-m+\zeta_k)\zeta_k^j=(-m+\zeta_k)\left(-\zeta_k^r-\zeta_k^{r+q^{\ell-1}}-\zeta_k^{r+2q^{\ell-1}}-\cdots-\zeta_k^{r+(q-2)q^{\ell-1}}\right)\\
=-m\zeta_k^r +\zeta_k^{r+1}-\cdots -m\zeta_k^{r+(q-2)q^{\ell-1}}+\zeta_k^{r+1+(q-2)q^{\ell-1}}
\end{multline*}
i.e. there exist non vanishing coefficients of $\zeta_k^i$ with $i>q$. But on the right side of \eqref{eq:mult_ind2} all coefficients of $\zeta_k^i$ 
vanish for $i>q$ and again no solution is possible.

We are left with the two cases that $j=\phi(k)-1$ and $j=k-1$. In both cases it is easy to see that $\zeta_k^j(-m+\zeta_k)$ has non-vanishing coefficients of 
$\zeta_k^i$ for $i>q$ and we deduce again a contradiction.

\section*{Acknowledgment}

The authors want to thank Shigeki Akiyama (University of Tsukuba) for
pointing them to the refinement of Peth{\H o} \cite[Theorem
7.1]{pethoe1991:polynomial_transformation_and} and Julien Bernat
(Universit\'e de Lorraine) for many valuable discussions concerning
Cobham's theorem and its requirements.

We also want to thank the anonymous referee, who read the manuscript
very carefully and his/her suggestions considerably improve the
presentation of the results.

\bibliographystyle{abbrv}
\bibliography{Bases}

\end{document}